\newtheorem{theorem}{Theorem}[section]
\newtheorem{lemma}[theorem]{Lemma}
\theoremstyle{definition}
\numberwithin{equation}{section}
\begin{document}
\setcounter{page}{1}

\title[Rings for which f.g. projective modules have FI-extending Property]{Rings for which f.g. projective modules have \\ the FI-extending Property}

\author[P. Danchev]{P. Danchev}
\author[M. Zahiri]{M. Zahiri} 
\author[S. Zahiri]{S. Zahiri}

\address{Peter Danchev, Institute of Mathematics and Informatics, Bulgarian Academy of Sciences, 1113 Sofia, Bulgaria}
\email{\textcolor[rgb]{0.00,0.00,0.84}{pvdanchev@yahoo.com; danchev@math.bas.bg}}
\address{Masoome Zahiri, Department of  Mathematics, Faculty of Sciences, Higher Education center of Eghlid, Eghlid, Iran}
\email{\textcolor[rgb]{0.00,0.00,0.84}{m.zahiri86@gmail.com; m.zahiri@eghlid.ac.ir}}
\address{Saeide Zahiri, Department of  Mathematics, Faculty of Sciences, Higher Education center of Eghlid, Eghlid, Iran}
\email{\textcolor[rgb]{0.00,0.00,0.84}{saeede.zahiri@yahoo.com; s.zahiri@eghlid.ac.ir}}

\thanks{*Corresponding author}
\subjclass{16D15; 16D40; 16D70} 
\keywords{FI-extending modules}

\begin{abstract}
A right $R$-module $M$ is said to be {\it FI-extending} if any fully invariant submodule of $M$ is essential in a direct summand of $M$. In this short note we prove that if $R$ has ACC on the right annihilators, then $R_R$ is FI-extending if, and only if, every f.g. projective module is too FI-extending. This is an affirmative answer to the question raised by Birkenmeier-Park-Rizvi in Commun. Algebra on 2002 (see \cite{2}).
\end{abstract}
\maketitle

\vskip1.0pc

\noindent{\centerline {\bf Dedicated to the memory of Prof. Syed Mohammed Tariq Rizvi}}

\section{\textbf{Introduction and Motivation}}

Throughout this paper, $R$ denotes a (possibly noncommutative) ring with identity, and $M_R$ stands for a unital right $R$-module. In last decades the
theory of extending modules unambiguously play an important role in the theory of rings and modules. 

According to \cite{birken}, a module $M$ is called {\it FI-extending} if every fully invariant submodule of $M$ is essential in a direct summand of $M$. Note that the set of fully invariant submodules of a module $M$ includes the socle, the Jacobson radical, the torsion submodule for a torsion theory (e.g., the singular submodule $\mathcal{Z}(MR)$), and also $MI$ for all right ideals $I$ of $R$, etc. \\ 

Likewise, the class of the FI-extending modules is closed under taking direct sums, but it is still unknown whether or not a direct summand of an FI-extending module is again FI-extending. However, the authors in \cite{tas} proved that every direct summand of an FI-extending module having the (C3) property is again FI-extending. They also showed that any direct summand of an FI-extending module with the SIP-extending property is  FI-extending as well.\\

Moreover, the authors in \cite{2} studied the FI-extending modules and stated the following interesting question: characterize the classes of rings satisfying the condition that every (cyclic, finitely generated, projective, etc.) module is FI-extending.\\ 

In this brief article, we approach to the posed question and establish that any finitely generated projective module over an FI-extending ring with A.C.C on annihilators is also an FI-extending module.

\section{\textbf{The Main Result}}

Before formulating the chief result, we need a series of preliminaries.

\begin{lemma}\label{1} (\cite[(7.15) Theorem]{Lam2}) Let $R$ have ACC on right annihilators. Then, $\mathcal{Z}_r(R)$ is a nilpotent ideal.
\end{lemma}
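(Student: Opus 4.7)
The plan is to prove this classical nilpotence result of Mewborn--Winton by combining two applications of the ACC hypothesis with the defining essentiality of right annihilators of elements of $\mathcal{Z}_r(R)$. First, set $Z:=\mathcal{Z}_r(R)$ and consider the ascending chain of right annihilators
\[
\mathrm{ann}_r(Z)\subseteq \mathrm{ann}_r(Z^2)\subseteq \mathrm{ann}_r(Z^3)\subseteq\cdots,
\]
which, by ACC, stabilizes at some index $n\ge 1$: thus $\mathrm{ann}_r(Z^n)=\mathrm{ann}_r(Z^k)$ for every $k\ge n$, and in particular $\mathrm{ann}_r(Z^n)=\mathrm{ann}_r(Z^{2n})$. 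The aim is to prove $Z^n=0$.

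Suppose not, so $Z^n\neq 0$; the identification of annihilators forces $Z^{2n}\neq 0$ (otherwise $\mathrm{ann}_r(Z^{2n})=R=\mathrm{ann}_r(Z^n)$ would give $Z^n=0$). Consider the family
\[
\mathcal F=\{\mathrm{ann}_r(a):\ a\in R\ \text{and}\ Z^n a\neq 0\},
\]
which is nonempty since $a=1$ qualifies. By ACC pick $a_0\in R$ with $Z^n a_0\neq 0$ and $\mathrm{ann}_r(a_0)$ maximal in $\mathcal F$. Writing $Z^{2n}=Z^n\cdot Z^n$, the inequality $Z^{2n}a_0\ne 0$ supplies $u,v\in Z^n$ with $v(ua_0)\neq 0$, so that $ua_0\neq 0$ and $Z^n(ua_0)\ne 0$; hence $\mathrm{ann}_r(ua_0)\in\mathcal F$. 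The inclusion $\mathrm{ann}_r(a_0)\subseteq\mathrm{ann}_r(ua_0)$ is automatic (as $a_0s=0$ yields $ua_0s=0$), so maximality forces the two annihilators to coincide.

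Finally, apply essentiality: since $u\in Z^n\subseteq Z$, the right ideal $\mathrm{ann}_r(u)$ is essential in $R_R$, and $a_0R$ is a nonzero right ideal, so their intersection contains a nonzero element $a_0 r$ with $u(a_0 r)=0$. But this says $r\in\mathrm{ann}_r(ua_0)=\mathrm{ann}_r(a_0)$, whence $a_0 r=0$, contradicting $a_0 r\neq 0$. The main obstacle, in my view, is recognizing that the stabilization must be used in the ``doubled'' form $\mathrm{ann}_r(Z^n)=\mathrm{ann}_r(Z^{2n})$: only then can one locate a witness $u$ satisfying both $ua_0\neq 0$ and $Z^n(ua_0)\neq 0$ simultaneously, which is precisely what permits comparing $\mathrm{ann}_r(ua_0)$ with $\mathrm{ann}_r(a_0)$ inside $\mathcal F$ and triggering the maximality contradiction via essentiality of $\mathrm{ann}_r(u)$.
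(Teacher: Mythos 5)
Your proof is correct. The paper does not actually prove this lemma --- it only cites \cite[(7.15) Theorem]{Lam2} --- and your argument is essentially the classical Mewborn--Winton proof given there: an annihilator chain $\mathrm{ann}_r(Z)\subseteq \mathrm{ann}_r(Z^2)\subseteq\cdots$ stabilized by ACC, a maximal-annihilator element, and the essentiality of $\mathrm{ann}_r(u)$ for $u\in \mathcal{Z}_r(R)$ to force a contradiction; the only (harmless) cosmetic difference is that you use the doubled form $\mathrm{ann}_r(Z^n)=\mathrm{ann}_r(Z^{2n})$ and conclude $Z^n=0$, whereas the standard version uses consecutive powers and shows $Z^{n+1}=0$.
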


\begin{lemma}\label{2} Let $R$ be a ring. Then, $\mathcal{Z}_r(M_n(R))=M_n(\mathcal{Z}_r(R))$.
\end{lemma}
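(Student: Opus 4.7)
The plan is to prove the two inclusions $\mathcal{Z}_r(M_n(R)) \supseteq M_n(\mathcal{Z}_r(R))$ and $\mathcal{Z}_r(M_n(R)) \subseteq M_n(\mathcal{Z}_r(R))$ separately, in each case reducing the question about essentiality of annihilators in $M_n(R)$ to essentiality of annihilators in $R$ via the standard matrix units $E_{jk}$.

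For the inclusion $(\supseteq)$, I would take $A=(a_{ij}) \in M_n(\mathcal{Z}_r(R))$ and set $K_{ij} = \mathrm{r.ann}_R(a_{ij})$. Each $K_{ij}$ is essential in $R_R$ by hypothesis, so the finite intersection $K = \bigcap_{i,j} K_{ij}$ is again essential in $R_R$. A direct matrix computation shows $A \cdot M_n(K) = 0$, hence $M_n(K) \subseteq \mathrm{r.ann}_{M_n(R)}(A)$, and it therefore suffices to verify that $M_n(K)$ is essential in $M_n(R)$. For this, given any nonzero $X=(x_{ij}) \in M_n(R)$, pick an entry $x_{ij}\neq 0$; by essentiality of $K$ in $R$ there exists $s \in R$ with $0 \neq x_{ij}s \in K$, and then $X \cdot (s E_{j1})$ is a nonzero element of $M_n(K)$.

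For the reverse inclusion $(\subseteq)$, suppose $A=(a_{ij}) \in \mathcal{Z}_r(M_n(R))$ and fix a position $(i,j)$. I want to show that $\mathrm{r.ann}_R(a_{ij})$ is essential in $R_R$, so take an arbitrary $0 \neq r \in R$ and consider the nonzero matrix $X = r E_{j1}$. Essentiality of $\mathrm{r.ann}_{M_n(R)}(A)$ in $M_n(R)$ yields $Y=(y_{pm}) \in M_n(R)$ with $XY \neq 0$ and $A(XY)=0$. A short computation using the matrix unit structure shows $(XY)_{pm} = r y_{1m}$ if $p=j$ and $0$ otherwise, so the entries of $A(XY)$ read $a_{lj}\, r\, y_{1m}$; comparing the $(i,m)$-entry to zero gives $a_{ij}\, r\, y_{1m} = 0$, while $XY \neq 0$ forces $r y_{1m_0} \neq 0$ for some $m_0$. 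Setting $s = y_{1m_0}$, we obtain $rs \neq 0$ and $rs \in \mathrm{r.ann}_R(a_{ij})$, which proves that this annihilator meets the right ideal $rR$ nontrivially and is therefore essential.

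The combinatorics with matrix units is the only delicate point; once one phrases essentiality of right ideals of $R$ in the cyclic form \emph{for every $0 \neq r \in R$, $rR \cap K \neq 0$} and uses $E_{j1}$ (or any $E_{jk}$) to probe a single entry of $A$, both directions reduce to one-line manipulations. I do not expect any deeper obstacle; Lemma~\ref{1} is not needed here, and the argument is purely formal.
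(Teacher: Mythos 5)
The paper states this lemma without proof (it is a standard fact, in the spirit of the Lam reference used for Lemma~\ref{1}), so your argument has to stand on its own. Your inclusion $\mathcal{Z}_r(M_n(R))\subseteq M_n(\mathcal{Z}_r(R))$ is correct as written: probing with $X=rE_{j1}$, the product $XY$ is supported on row $j$ with entries $ry_{1m}$, the $(i,m)$-entry of $A(XY)$ is indeed $a_{ij}\,r\,y_{1m}$, and $XY\neq 0$ supplies an $m_0$ with $ry_{1m_0}\neq 0$, so $rR\cap \mathrm{r.ann}_R(a_{ij})\neq 0$.

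The reverse inclusion, however, has a genuine gap at the step where you verify that $M_n(K)$ is essential in $M_n(R)$. Having chosen a nonzero entry $x_{ij}$ of $X$ and an $s$ with $0\neq x_{ij}s\in K$, you assert that $X\cdot(sE_{j1})$ lies in $M_n(K)$. But the first column of $X\cdot(sE_{j1})$ is $(x_{1j}s,\dots,x_{nj}s)^{T}$, and your choice of $s$ only controls the single entry $x_{ij}s$; the remaining entries $x_{pj}s$ with $p\neq i$ need not belong to $K$, so the matrix need not lie in $M_n(K)$. The step is repairable by the standard device: since $K\leq_{es}R_R$, also $K^{n}\leq_{es}(R^{n})_R$ (a finite direct sum of essential extensions is essential; this is exactly the easy direction of the paper's Lemma~\ref{es}), so applying essentiality to the cyclic submodule generated by the nonzero column $v=(x_{1j},\dots,x_{nj})^{T}$ produces an $s\in R$ with $0\neq vs\in K^{n}$, i.e.\ \emph{all} entries $x_{pj}s$ land in $K$ simultaneously; then $X\cdot(sE_{j1})$ really is a nonzero element of $X\,M_n(R)\cap M_n(K)$. (Alternatively, iterate your one-entry argument $n$ times, shrinking $s$ at each stage.) With that correction both inclusions go through and the lemma is established.
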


\begin{lemma}\label{es} Let $R$ be a ring and $t$ a positive integer. Then, $K\leq_{es} R^t_R$ if, and only if, there exists $L\leq_{es} R_R$ such that $L^t\leq K$.
\end{lemma}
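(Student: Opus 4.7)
The plan is to handle the two implications separately, with the reverse direction being essentially general nonsense and the forward direction requiring a small construction.

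For the \emph{sufficiency}, assume $L \leq_{es} R_R$ with $L^t \leq K$. The standard fact that a finite direct sum of essential submodules is essential in the direct sum of the ambient modules gives $L^t \leq_{es} R^t_R$; then $L^t \leq K \leq R^t_R$ forces $K \leq_{es} R^t_R$ by transitivity of essentiality.

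For the \emph{necessity}, assume $K \leq_{es} R^t_R$ and write $\varepsilon_i \in R^t$ for the tuple with $1$ in coordinate $i$ and $0$ elsewhere. The idea is to extract an essential right ideal from $K$ ``coordinate by coordinate'' and then intersect. Concretely, for each $i \in \{1,\ldots,t\}$, I would set
\[
L_i := \{\, r \in R : \varepsilon_i r \in K \,\},
\]
which is manifestly a right ideal of $R$. The key step is showing that each $L_i$ is essential in $R_R$: for any $0 \neq s \in R$, the cyclic submodule $\varepsilon_i s R \leq R^t$ is nonzero, so by essentiality of $K$ it meets $K$ in some nonzero $\varepsilon_i s r$, whence $0 \neq sr \in sR \cap L_i$. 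Then $L := L_1 \cap \cdots \cap L_t$ is essential in $R_R$ as a finite intersection of essential right ideals, and any tuple $(r_1,\ldots,r_t) \in L^t$ decomposes as $\sum_{i=1}^t \varepsilon_i r_i \in K$ (each summand lying in $K$ by definition of $L_i$), giving $L^t \leq K$.

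No serious obstacle is anticipated; the only delicate point is the essentiality of each $L_i$, which rests on the elementary observation that $\varepsilon_i s R \neq 0$ whenever $s \neq 0$.
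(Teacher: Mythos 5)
Your proof is correct: the sufficiency direction via $L^t \leq_{es} R^t_R$ and the necessity direction via the right ideals $L_i = \{r \in R : \varepsilon_i r \in K\}$, their essentiality, and the finite intersection $L = L_1 \cap \cdots \cap L_t$ are all sound, and this is the standard argument for this fact. The paper states the lemma without proof, so there is nothing to compare against, but your write-up supplies exactly the verification one would expect.
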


\begin{lemma}\label{3} Let $M$ be a module and $N$ a fully invariant submodule of $M$. If there exists an idempotent $E\in End_R(M)$ such that $N\leq_{es} E(M)$, then, for any $s\in End_R(M)$, there exists an essential submodule $K$ of $M$ such that $(1-E)sE(K)=(0)$.
\end{lemma}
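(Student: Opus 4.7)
The plan is to exhibit an explicit essential submodule $K$ by combining the essential submodule $N$ of $E(M)$ with the complementary summand $(1-E)(M)$. Specifically, I would set
\[
K := N + (1-E)(M).
\]
Since $N \leq_{es} E(M) \subseteq E(M)$ and $E(M) \cap (1-E)(M) = 0$, the sum is direct: $K = N \oplus (1-E)(M)$. Using the fact that $M = E(M) \oplus (1-E)(M)$ together with the general principle that $A_1 \leq_{es} B_1$ and $A_2 \leq_{es} B_2$ imply $A_1 \oplus A_2 \leq_{es} B_1 \oplus B_2$, I obtain $K \leq_{es} M$.

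It remains to verify $(1-E)sE(K) = 0$. I would split $K$ into its two pieces. On the summand $(1-E)(M)$ the vanishing is automatic, since for $m' = (1-E)m \in (1-E)(M)$,
\[
E(m') = E(1-E)(m) = (E - E^2)(m) = 0,
\]
and hence $(1-E)sE(m') = 0$ identically. For the summand $N$, I use full invariance: given $n \in N$, since $N \subseteq E(M)$ we have $E(n) = n$, so $sE(n) = s(n)$, and by full invariance of $N$, $s(n) \in N \subseteq E(M)$, whence $E(s(n)) = s(n)$, i.e., $(1-E)s(n) = 0$. Combining both cases yields $(1-E)sE(K) = 0$, as required.

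The main (and really only) conceptual point is the choice of $K$: one needs to notice that any essential submodule of $M$ lying in $\ker((1-E)sE)$ must contain the complementary summand $(1-E)(M)$ almost for free, and then use the hypothesis on $N$ to handle the $E(M)$-side. Once this decomposition is in hand, both inclusions are immediate from the idempotence of $E$ and the full invariance of $N$; no further machinery (such as Lemmas \ref{1}--\ref{es}) is needed for this particular statement.
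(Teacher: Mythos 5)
Your proof is correct and takes essentially the same route as the paper: the paper also sets $K := N \oplus (1-E)(M)$, notes it is essential in $M$, and concludes. You have simply filled in the verification that $(1-E)sE$ kills both summands (via idempotence of $E$ on one piece and full invariance of $N$ on the other), which the paper leaves implicit.
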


\begin{proof} As $N\leq_{es} E(M)$, we must have $$N\oplus (1-E)(M)\leq_{es}M.$$ Put $K:=N\oplus (1-E)(M)$. Thus, we obtain $(1-E)sE(K)=(0)$ for any $s\in  End_R(M)$, as required.
\end{proof}

We are now managing to prove our principal result which gives a positive answer to the aforementioned problem from \cite{2}. 

\begin{theorem} Let $R$ have ACC on right annihilators. Then, $R$ is a right FI-extending if, and only if, any f.g. projective right module is FI-extending.
\end{theorem}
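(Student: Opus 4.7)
The implication ``$\Leftarrow$'' is trivial since $R_R$ is itself finitely generated projective. For the forward direction, I fix a finitely generated projective right $R$-module $P$, choose an idempotent $e \in M_n(R) \cong \mathrm{End}_R(R^n)$ with $P = eR^n$, and use the direct-sum closure of the FI-extending class (noted in the introduction) to get that $R^n$ is FI-extending. Given $N$ fully invariant in $P$, set $\tilde{N} := M_n(R)\,N$, the smallest fully invariant submodule of $R^n$ containing $N$, and pick an idempotent $g \in M_n(R)$ with $\tilde{N} \leq_{es} gR^n$. The full invariance of $N$ in $P$ forces $e\tilde{N}\subseteq N$ (since any generator $\phi(n) \in \tilde{N}$ with $\phi \in M_n(R)$, $n \in N$, satisfies $e\phi(n) = (e\phi e)(n) \in \mathrm{End}_R(P)\cdot N \subseteq N$), and applying $e$ to any element of $\tilde{N}\cap P$ (on which $e$ is the identity) yields the useful identity
\[
\tilde{N}\cap P = N.
\]

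Next, Lemma~\ref{3} applied to $M = R^n$, $E = g$, $s = e$ gives an essential $K \leq_{es} R^n$ annihilated by $(1-g)eg$. Combined with Lemma~\ref{es}, Lemma~\ref{2} and Lemma~\ref{1}, this forces $(1-g)eg$ into the nilpotent ideal $J := M_n(\mathcal{Z}_r(R))$ of $M_n(R)$. Successive left- then right-multiplication by $e$ produces $ege - (ege)^2 \in J$, so in $S := eM_n(R)e = \mathrm{End}_R(P)$ the element $x := ege$ satisfies $x - x^2 \in eJe$, a nil ideal of $S$. Standard idempotent lifting modulo a nil ideal (e.g.\ via a polynomial $p(t) \in \mathbb{Z}[t]$ with $p(0) = 0$ and $p(1) = 1$ obtained from the binomial expansion of $1 = (x + (1-x))^{2m-1}$ with $(eJe)^m = 0$) supplies $f := p(x) \in S$ idempotent, giving $P = fP \oplus (1-f)P$. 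Since $g$ is the identity on $\tilde{N}$ and $e$ is the identity on $P$, $x$ acts as the identity on $N$; hence so does $f$ (using $p(1) = 1$), and therefore $N \subseteq fP$.

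The remaining---and main---obstacle is to establish $N \leq_{es} fP$. For a nonzero $y \in fP$, the relation $gf \equiv f \pmod{J}$ (obtained by the same polynomial manipulation, since each monomial $x^k$ satisfies $gx^k \equiv x^k \pmod{J}$) gives $(1-g)y \in JR^n \subseteq \mathcal{Z}(R^n_R)$; choose an essential $I \leq_{es} R_R$ annihilating $(1-g)y$, so $yI \subseteq gR^n \cap fP$. If $yI \neq 0$, essentialness of $\tilde{N}$ in $gR^n$ yields $0 \neq yir \in \tilde{N}$ for some $i \in I$, $r \in R$, and the identity $\tilde{N} \cap P = N$ places $yir$ in $yR \cap N$. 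The degenerate case $yI = 0$ forces $y \in \mathcal{Z}(R^n_R) = \mathcal{Z}_r(R)^n$; closing it requires an additional trick, most naturally either enlarging $\tilde{N}$ from the start to absorb $\mathcal{Z}(P)$ inside the fully-invariant hull (while carefully tracking the effect on $\tilde{N}\cap P$) or inducting on the nilpotence index of $\mathcal{Z}_r(R)$ through the filtration $P \supseteq P\mathcal{Z}_r(R) \supseteq P\mathcal{Z}_r(R)^2 \supseteq \cdots$. I expect this singular subcase is precisely where the A.C.C.\ hypothesis, via Lemma~\ref{1}, is most delicately exploited.
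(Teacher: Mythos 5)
Your setup tracks the paper's closely: the embedding $P\oplus Q=R^n$, your $\tilde{N}=M_n(R)N$ (which coincides with the paper's $P_1\oplus Q_1$), and the chain Lemma~\ref{3} $\to$ Lemma~\ref{es} $\to$ Lemma~\ref{2} $\to$ Lemma~\ref{1} forcing the relevant element into the nilpotent ideal $M_n(\mathcal{Z}_r(R))$ are all exactly the paper's mechanism. Your idempotent-lifting of $x=ege$ to an honest idempotent $p(x)\in \mathrm{End}_R(P)$ is correct and in fact makes the ``direct summand'' half of the conclusion immediate, whereas the paper must labor to show that $e^{2k+1}(P)$ splits off via the decomposition $S=\mathrm{diag}(e^{2k+1},f^{2k+1})S\oplus(1-E)S$. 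The identity $\tilde{N}\cap P=N$ and the non-degenerate branch of your essentiality argument ($yI\neq 0$ implies $yR\cap N\neq 0$) are also sound.

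The gap is the one you flag yourself, and it is not a removable technicality: you never prove $N\leq_{es} p(x)P$ because the case of a nonzero singular $y\in p(x)P\cap\mathcal{Z}(R^n_R)$ is left open, and neither of your two suggested repairs is carried out (nor is it clear either works --- an induction along $P\supseteq P\mathcal{Z}_r(R)\supseteq\cdots$ still has to produce a nonzero element of $yR\cap N$, not merely locate $y$ in a filtration, and enlarging $\tilde{N}$ to absorb $\mathcal{Z}(P)$ changes which summand you land in). The reason you are stuck is structural: your congruence $g\,p(x)\equiv p(x)\pmod{J}$ only relates $p(x)P$ to $gR^n$ up to singular error, so singular elements of $p(x)P$ escape the transfer of essentiality from $gR^n$. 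The paper avoids the dichotomy entirely by a different modification of the idempotent: it sandwiches $E$ as $DED$ with $D=\mathrm{diag}(e^k,f^k)$ (here $e,f$ are the diagonal blocks of $E$), computes $DED=\mathrm{diag}(e^{2k+1},f^{2k+1})$ using $(mn)^k=(nm)^k=0$, and exploits that $D$ acts as the identity on all of $P_1\oplus Q_1$ --- both components. Then $DED(p,q)\neq 0$ gives $ED(p,q)\neq 0$, essentiality of $P_1\oplus Q_1$ in $E(P\oplus Q)$ supplies $0\neq ED(pr,qr)\in P_1\oplus Q_1$, and applying $D$ cannot kill this element; no singular case ever arises, and $P_1\leq_{es}e^{2k+1}(P)$ follows by intersecting with $P$. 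To salvage your route you would need either to close the singular case directly or to replace $p(x)$ by a two-sided (diagonal) modification of $g$ in the paper's spirit; as written, the proof is incomplete at its decisive step.
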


\begin{proof} Assume first that $P$ is a f.g. projective module and $P_1$ a fully invariant submodule of $P$. Since $P$ is a f.g. projective, there is a positive integer $t$ such that $P$ is a direct summand of $R^t$. So, $P\oplus Q=R^t$ for some $Q\leq R^t$ as right $R$-modules. Set $Q_1:=\sum Im(f)$, where $f\in Hom_R(P,Q)$. Therefore, one readily checks that $P_1\oplus Q_1$ is a fully invariant submodule of $P\oplus Q$. Since $R_R$ is FI-extending and, as noticed above, the FI-extending property is closed under direct sums, we can deduce that $R^t_R$ is FI-extending too. So, $P\oplus Q$ is FI-extending as well, and hence there exists an idempotent $$E=\left(
                               \begin{array}{cc}
                                 e &m  \\
                                 n & f \\
                               \end{array}
                             \right)\in \left(
                               \begin{array}{cc}
                                 End_R(M_1) & Hom_R(M_2,M_1) \\
                                 Hom_R(M_1,M_2) & End_R(M_2) \\
                               \end{array}
                             \right)$$ such that $(P_1\oplus Q_1)\leq_{es} E(P\oplus Q)$. But, since $E=E^2$, it must be that $$e=e^2+mn,\ m=em+mf,\ n=fn+ne,\ f=f^2+nm.$$ It now follows that $$em=m(1-f),\ (1-e)m=mf,\ ne=(1-f)n,\ fn-n(1-e).$$ \par
                            Furthermore, since $$\left(
                               \begin{array}{cc}
                                 mn &(1-e)m  \\
                                 -ne & -nm \\
                               \end{array}
                             \right)=\left(
                               \begin{array}{cc}
                                 1-e &-m  \\
                                 -n & 1-f \\
                               \end{array}
                             \right)\left(
                               \begin{array}{cc}
                                 1 &0  \\
                                 0 & 0 \\
                               \end{array}
                             \right)\left(
                               \begin{array}{cc}
                                 e &m  \\
                                 n & f \\
                               \end{array}
                             \right),$$ thanks to Lemma \ref{3} there is an essential submodule, say $P_2\oplus Q_2$, of $P\oplus Q$ such that $\left(
                               \begin{array}{cc}
                                 mn &(1-e)m  \\
                                 -ne & -nm \\
                               \end{array}
                             \right)(P_2,Q_2)=0$. As $R^t=P\oplus Q$, Lemma \ref{es} applies to get $L\leq_{es} R_R$ such that $L^t\leq (P_2\oplus Q_2)$. Consequently, $$\left(
                               \begin{array}{cc}
                                 mn &(1-e)m  \\
                                 -ne & -nm \\
                               \end{array}
                             \right)(L^t)=0.$$ It, thereby, follows that $$\left(
                               \begin{array}{cc}
                                 mn &(1-e)m  \\
                                 -ne & -nm \\
                               \end{array}
                             \right)M_t(L)=0.\indent (*)$$ Obviously, $M_t(L)\leq_{es} M_t(R)$ as $M_t(R)$-right module. Since $R^t=P\oplus Q$, we can write $$M_t(R)=End_R(R^t)=End_R(P\oplus Q).$$ So, $\left(
                               \begin{array}{cc}
                                 mn &(1-e)m  \\
                                 -ne & -nm \\
                               \end{array}
                             \right)\in M_t(R)$. From this and $(*)$, along with Lemma~\ref{2} at hand, we detect that $$\left(
                               \begin{array}{cc}
                                 mn &(1-e)m  \\
                                 -ne & -nm \\
                               \end{array}
                             \right)\in \mathcal{Z}_r(M_t(R))=M_t(\mathcal{Z}_r(R)).$$ As Lemma~\ref{1} tells us that $\mathcal{Z}_r(R)$ is a nilpotent ideal, we conclude $mn$ is nilpotent. Hence, there exists a positive integer $s$ such that $(mn)^k=(nm)^k=0$.\\
          We next claim that $$(P_1\oplus Q_1)\leq_{es} \left(
                               \begin{array}{cc}
                                 e^k &0  \\
                                 0 & f^k \\
                               \end{array}
                             \right)E\left(
                               \begin{array}{cc}
                                 e^k &0  \\
                                 0 & f^k \\
                               \end{array}
                             \right)(P\oplus Q).$$ In fact, to demonstrate that, it is easy to verify that 
                             $$\left(
                               \begin{array}{cc}
                                 e^k &0  \\
                                 0 & f^k \\
                               \end{array}
                             \right)E\left(
                               \begin{array}{cc}
                                 e^k &0  \\
                                 0 & f^k \\
                               \end{array}
                             \right)(P_1\oplus Q_1)=P_1\oplus Q_1,$$ which forces that $$(P_1\oplus Q_1)\leq \left(
                               \begin{array}{cc}
                                 e^k &0  \\
                                 0 & f^k \\
                               \end{array}
                             \right)E\left(
                               \begin{array}{cc}
                                 e^k &0  \\
                                 0 & f^k \\
                               \end{array}
                             \right)(P\oplus Q).$$ 
                             
                             Now, assume that $$\left(
                               \begin{array}{cc}
                                 e^k &0  \\
                                 0 & f^k \\
                               \end{array}
                             \right)E\left(
                               \begin{array}{cc}
                                 e^k &0  \\
                                 0 & f^k \\
                               \end{array}
                             \right)(p, q)\neq 0\ \text{for some}\ (p, q)\in (P\oplus Q).$$ It, thus, follows that $ E\left(
                               \begin{array}{cc}
                                 e^k &0  \\
                                 0 & f^k \\
                               \end{array}
                             \right)(p, q)\neq 0$. Since $E\left(
                               \begin{array}{cc}
                                 e^k &0  \\
                                 0 & f^k \\
                               \end{array}
                             \right)(p, q)$ is a non-zero element of $E(P\oplus Q)$ and $(P_1\oplus Q_1)\leq_{es} E(P\oplus Q)$, there exists an element $r\in R$ such that $0\neq E\left(
                               \begin{array}{cc}
                                 e^k &0  \\
                                 0 & f^k \\
                               \end{array}
                             \right)(pr, qr)\in (P_1\oplus Q_1)$. Besides, since $\left(
                               \begin{array}{cc}
                                 e^k &0  \\
                                 0 & f^k \\
                               \end{array}
                             \right)(p_1,q_1)=(p_1,q_1)$ for every $(p_1,q_1)\in (P_1\oplus Q_1)$, we may write
                              $$0\neq \left(
                               \begin{array}{cc}
                                 e^k &0  \\
                                 0 & f^k \\
                               \end{array}
                             \right)E\left(
                               \begin{array}{cc}
                                 e^k &0  \\
                                 0 & f^k \\
                               \end{array}
                             \right)(pr, qr)=E\left(
                               \begin{array}{cc}
                                 e^k &0  \\
                                 0 & f^k \\
                               \end{array}
                             \right)(pr, qr)\in (P_1,Q_1),$$ which yields that
                             $$(P_1\oplus Q_1)\leq_{es} \left(
                               \begin{array}{cc}
                                 e^k &0  \\
                                 0 & f^k \\
                               \end{array}
                             \right)E\left(
                               \begin{array}{cc}
                                 e^k &0  \\
                                 0 & f^k \\
                               \end{array}
                             \right)(P\oplus Q).$$

                             On the other hand, one finds that $$\left(
                               \begin{array}{cc}
                                 e^k &0  \\
                                 0 & f^k \\
                               \end{array}
                             \right)E\left(
                               \begin{array}{cc}
                                 e^k &0  \\
                                 0 & f^k \\
                               \end{array}
                             \right)=\left(
                               \begin{array}{cc}
                                 e^k &0  \\
                                 0 & f^k \\
                               \end{array}
                             \right)\left(
                               \begin{array}{cc}
                                 e^{k+1} &mf^{k}  \\
                                 ne^{k} & f^{k+1} \\
                               \end{array}
                             \right)=$$$$\left(
                               \begin{array}{cc}
                                 e^{2k+1} &e^kmf^{k}  \\
                                 f^kne^{k} & f^{2k+1}\\
                               \end{array}
                             \right)=\left(
                               \begin{array}{cc}
                                 e^{2k+1} &m(1-f)^kf^{k}  \\
                                 n(1-e)^ke^{k} & f^{2k+1}\\
                               \end{array}
                             \right)=\left(
                               \begin{array}{cc}
                                 e^{2k+1} &0  \\
                                0 & f^{2k+1}\\
                               \end{array}
                             \right).$$ Thus, $$(P_1\oplus Q_1)\leq_{es} \left(
                               \begin{array}{cc}
                                 e^{2k+1} &0  \\
                                 0 & f^{2k+1} \\
                               \end{array}
                             \right)(P\oplus Q).\indent (**)$$ So, $(P_1\oplus Q_1)\leq_{es}\left(
                               \begin{array}{cc}
                                 e^{2k+1} &0  \\
                                0 & f^{2k+1}\\
                               \end{array}
                             \right)(P\oplus Q)$. It, therefore, follows that $$\left(
                               \begin{array}{cc}
                                 e^{2k+1} &0  \\
                                0 & f^{2k+1}\\
                               \end{array}
                             \right)(P\oplus Q)\cap (1-E)(P\oplus Q)=(0),$$ as claimed.\\ 
                             
                             We now assert that $$\left(
                               \begin{array}{cc}
                                 e^{2k+1} &0  \\
                                0 & f^{2k+1}\\
                               \end{array}
                             \right)S\oplus (1-E)S=S,$$ where $S=End_R(P\oplus Q)$.\\ First, to substantiate that, we show that $$\left(
                               \begin{array}{cc}
                                 e^{2k+1} &0  \\
                                0 & f^{2k+1}\\
                               \end{array}
                             \right)S\cap (1-E)S=(0),$$ as for otherwise there would exist $$0\neq s(P\oplus Q)\leq \left(
                               \begin{array}{cc}
                                 e^{2k+1} &0  \\
                                0 & f^{2k+1}\\
                               \end{array}
                             \right)(P\oplus Q).$$ But, as $$(P_1\oplus Q_1)\leq_{es}\left(
                               \begin{array}{cc}
                                 e^{2k+1} &0  \\
                                0 & f^{2k+1}\\
                               \end{array}
                             \right)(P\oplus Q),$$ we have $s(P\oplus Q)\cap (P_1\oplus Q_1)\neq (0)$. Since $(P_1\oplus Q_1)\leq_{es}E(P\oplus Q)$, we receive $s(P\oplus Q)\cap E(P\oplus Q)\neq 0$, arriving at a contradiction with $s\in (1-E)S$. Hence, $$\left(
                               \begin{array}{cc}
                                 e^{2k+1} &0  \\
                                0 & f^{2k+1}\\
                               \end{array}
                             \right)S\cap (1-E)S=(0).$$\\
From the equality $$(1-E)\left(
                               \begin{array}{cc}
                                 e^{2k} &0  \\
                                0 & 0\\
                               \end{array}
                             \right) =\left(
                               \begin{array}{cc}
                                 e^{2k}-e^{2k+1} &0  \\
                                -ne^{2k} & 0\\
                               \end{array}
                             \right),$$ we get that $$\left(
                               \begin{array}{cc}
                                 e^{2k} &0  \\
                                -ne^{2k} & 0\\
                               \end{array}
                             \right)     \in    \left(
                               \begin{array}{cc}
                                 e^{2k+1} &0  \\
                                0 & f^{2k+1}\\
                               \end{array}
                             \right)S\oplus (1-E)S.$$
              So, we may write that $$\left(
                               \begin{array}{cc}
                                 e^{2k} &0  \\
                                -ne^{2k} & 0\\
                               \end{array}
                             \right)   \left(
                               \begin{array}{cc}
                           0 &m  \\
                               0 & 0\\
                               \end{array}
                             \right)=\left(
                               \begin{array}{cc}
                                0& e^{2k}m   \\
                               0& -ne^{2k}m \\
                               \end{array}
                             \right)$$$$=\left(
                               \begin{array}{cc}
                                0& e^{2k}m   \\
                               0& -nm(1-f)^{2k} \\
                               \end{array}
                             \right)  \in    \left(
                               \begin{array}{cc}
                                 e^{2k+1} &0  \\
                                0 & f^{2k+1}\\
                               \end{array}
                             \right)S\oplus (1-E)S.\indent (*_1)$$

                             Furthermore, from $$(1-E)\left(
                               \begin{array}{cc}
                             0 &0  \\
                                0 & (1-f)^{2k}\\
                               \end{array}
                             \right) =\left(
                               \begin{array}{cc}
                                0 &-e^{2k}m   \\
                                0 &  (1-f)^{2k+1}\\
                               \end{array}
                             \right)$$ and $(*_1)$, we derive $$\left(
                               \begin{array}{cc}
                                0& 0 \\
                               0& -nm(1-f)^{2k}+(1-f)^{2k+1} \\
                               \end{array}
                             \right) =\left(
                               \begin{array}{cc}
                                0& 0 \\
                               0& -f(1-f)^{2k+1}+(1-f)^{2k+1} \\
                               \end{array}
                             \right)$$$$=\left(
                               \begin{array}{cc}
                                0& 0 \\
                               0& (1-f)^{2k+2} \\
                               \end{array}
                             \right) \in    \left(
                               \begin{array}{cc}
                                 e^{2k+1} &0  \\
                                0 & f^{2k+1}\\
                               \end{array}
                             \right)S\oplus (1-E)S.$$ Now, set $(1-f)^{2k+2} :=1+\Delta$, where $$\Delta=-(2k+2)f+(k+1)(2k+1)f^2-\frac{(k+1)(2k+1)2k}{3}f^3+\cdots+f^{2k+2}.$$ Then, it must be that $$\left(
                               \begin{array}{cc}
                                0& 0 \\
                               0& 1+\Delta \\
                               \end{array}
                             \right) \in    \left(
                               \begin{array}{cc}
                                 e^{2k+1} &0  \\
                                0 & f^{2k+1}\\
                               \end{array}
                             \right)S\oplus (1-E)S.\indent (*_3)$$ As $$\left(
                               \begin{array}{cc}
                                 0&0  \\
                                0 & f^{2k+1}\\
                               \end{array}
                             \right)=\left(
                               \begin{array}{cc}
                                 e^{2k+1} &0  \\
                                0 & f^{2k+1}\\
                               \end{array}
                             \right)\left(
                               \begin{array}{cc}
                                 0 &0  \\
                                0 & 1\\
                               \end{array}
                             \right)\in \left(
                               \begin{array}{cc}
                                 e^{2k+1} &0  \\
                                0 & f^{2k+1}\\
                               \end{array}
                             \right)S,$$ we obtain $$\left(
                               \begin{array}{cc}
                                 0 &0  \\
                                0 & \Delta f^{2k}\\
                               \end{array}
                             \right)\in \left(
                               \begin{array}{cc}
                                 e^{2k+1} &0  \\
                                0 & f^{2k+1}\\
                               \end{array}
                             \right)S.$$ From this and $(*_3)$, we have $$\left(
                               \begin{array}{cc}
                                 0 &0  \\
                                0 &  f^{2k}\\
                               \end{array}
                             \right)\in \left(
                               \begin{array}{cc}
                                 e^{2k+1} &0  \\
                                0 & f^{2k+1}\\
                               \end{array}
                             \right)S\oplus (1-E)S.$$ It guarantees that $$\left(
                               \begin{array}{cc}
                                 0 &0  \\
                                0 & \Delta f^{2k-1}\\
                               \end{array}
                             \right)\in \left(
                               \begin{array}{cc}
                                 e^{2k+1} &0  \\
                                0 & f^{2k+1}\\
                               \end{array}
                             \right)S\oplus (1-E)S.$$ From this and $(*_3)$, we extract $$\left(
                               \begin{array}{cc}
                                 0 &0  \\
                                0 & f^{2k-1}\\
                               \end{array}
                             \right)\in \left(
                               \begin{array}{cc}
                                 e^{2k+1} &0  \\
                                0 & f^{2k+1}\\
                               \end{array}
                             \right)S\oplus (1-E)S.$$ Continuing in this way, we arrive at $$\left(
                               \begin{array}{cc}
                                 0 &0  \\
                                0 & 1\\
                               \end{array}
                             \right)\in \left(
                               \begin{array}{cc}
                                 e^{2k+1} &0  \\
                                0 & f^{2k+1}\\
                               \end{array}
                             \right)S\oplus (1-E)S.$$ By symmetry, one plainly see that $$\left(
                               \begin{array}{cc}
                                 1 &0  \\
                                0 & 0\\
                               \end{array}
                             \right)\in \left(
                               \begin{array}{cc}
                                 e^{2k+1} &0  \\
                                0 & f^{2k+1}\\
                               \end{array}
                             \right)S\oplus (1-E)S.$$ So, $S=\left(
                               \begin{array}{cc}
                                 e^{2k+1} &0  \\
                                0 & f^{2k+1}\\
                               \end{array}
                             \right)S\oplus (1-E)S$, as needed.\\ Therefore, $$S(P,0)=\left(
                               \begin{array}{cc}
                                 e^{2k+1} &0  \\
                                0 & f^{2k+1}\\
                               \end{array}
                             \right)(P,0)\oplus (1-E)(P,0),$$ that is, $$S(P,0)=(e^{2k+1}(P),0)\oplus  \left\{\left(
                               \begin{array}{cc}
                                 (1-e)p &0  \\
                                 -np & 0 \\
                               \end{array}
                             \right)\mid \ p\in P\right\}.$$ \\ Finally, $$(P,0)=S(P,0)\cap P=(e^{2k+1}(P),0)\oplus  \left(\left\{\left(
                               \begin{array}{cc}
                                 (1-e)p &0  \\
                                 -np & 0 \\
                               \end{array}
                             \right)\mid \ p\in P\right\}\cap P\right),$$ which means that $e^{2k+1}(P)$ is a direct summand of $P$.\\ 
                             
                             On the other side, because in view of $ (**)$ we have $$(P_1\oplus Q_1)\leq_{es} \left(
                               \begin{array}{cc}
                                 e^{2k+1} &0  \\
                                 0 & f^{2k+1} \\
                               \end{array}
                             \right)(P\oplus Q),$$ we can write that $P_1\leq_{es} e^{2k+1}(P)$, i.e., $P$ is a FI-extending module, as wanted.
                           \end{proof}

\bibliographystyle{amsplain}

\end{document}